\renewcommand{\(}{\left(}
\renewcommand{\)}{\right)}
\renewcommand{\[}{\left[}
\renewcommand{\]}{\right]}
\renewcommand{\c}{\mathbf{c}}
\renewcommand{\j}{\mathbf{j}}
\newcommand{\w}{\mathbf{w}}
\newcommand{\I}{\mathbf{I}}
\newcommand{\A}{\mathbf{A}}
\newcommand{\T}{\mathbf{T}}
\newcommand{\N}{\mathbb{N}}
\renewcommand{\u}{\mathbf{u}}
\newcommand{\B}{\mathbf{B}}
\renewcommand{\i}{\mathbf{i}}
\newcommand{\Tr}[1]{{\rm{Tr}}\left(#1\right)}
\newcommand{\tr}[1]{{\rm{tr}}\left(#1\right)}
\newcommand{\End}[1]{{\rm{End}}}
\newcommand{\mmod}{\,{\rm{mod}}\;}
\newtheorem{lemma}{Lemma}
\newtheorem{definition}{Definition}
\newtheorem{prop}{Proposition}
\newcommand{\mypm}{\mathbin{\smash{%
\raisebox{0.35ex}{%
            $\underset{\raisebox{0.5ex}{$\smash -$}}{\smash+}$%
            }%
        }%
    }%
}
\begin{document}
\title{Asymptotically Pseudo-Independent Matrices}

%\IEEEspecialpapernotice{\hfill\textit{Dedicated to the memory of Solomon W. Golomb (1932-2016)}}

\author{Ilya Soloveychik and Vahid Tarokh \\ Department of Electrical and Computer Engineering, Duke University
\thanks{This work was supported by the Office of Naval Research grant No. N00014-18-1-2244.}
}
\maketitle

\begin{abstract}
We show that the family of pseudo-random matrices recently discovered by Soloveychik, Xiang, and Tarokh in their work ``Symmetric Pseudo-Random Matrices'' exhibits asymptotic independence. More specifically, any two sequences of matrices of matching sizes from that construction generated using sequences of different non-reciprocal primitive polynomials are asymptotically independent.
\end{abstract}

\begin{IEEEkeywords}
Pseudo-random matrices, asymptotic independence, Wigner's ensemble.
\end{IEEEkeywords}

\section{Introduction}
Random matrices have been a very active area of research for the last few decades and have found enormous applications in various areas of modern mathematics, physics, engineering, biological modeling, and other fields \cite{akemann2011oxford}. In this article, we focus on the classical model of square symmetric matrices with $\mypm 1$ entries, referred to as square symmetric {\it sign} matrices. For this class of matrices, Wigner \cite{wigner1955characteristic, wigner1958distribution} demonstrated that if the elements of the upper triangular part (including the main diagonal) of an $n \times n$ matrix are independent Rademacher ($\mypm 1$ with equal probabilities) random variables, then as $n$ grows a properly scaled empirical spectral measure converges to the semicircular law. 

%About three decades after the inception of RMT, in 1985 Voiculescu \cite{voiculescu1985sym} extended the ideas of Wigner and initiated a new mathematical field today known as the Free Probability Theory (FPT). FPT can be viewed as a non-commutative counterpart of the classical probability, where the main objects of study in simplistic cases can be regarded as sequences of random matrices (in general case these are operators or elements of unital algebras). The notion of (asymptotic) \textit{freeness} or free independence of variables extends the canonical statistical independence property by demanding their \textit{structural} independence (see Section \ref{sec:asympt_fr} for formal definitions). As a result, FPT refines the classical RMT by studying not only sequences of matrices and their limiting spectral laws but also the relations between different such sequences.

In many engineering applications, one needs to simulate matrices with \textit{random-looking} properties. The most natural way to generate an instance of a random $n \times n$ sign matrix is to toss a fair coin $\frac{n(n+1)}{2}$ times, fill the upper triangular part of a matrix with the outcomes and reflect the upper triangular part into the lower. Unfortunately, for large $n$ such an approach would require a powerful source of randomness due to the independence condition \cite{gentle2013random}. In addition, when the data is generated by a truly random source, atypical  \textit{non-random looking} outcomes have non-zero probability of showing up. Yet another issue is that any experiment involving tossing a coin would be impossible to reproduce exactly. All these reasons stimulated researchers and engineers from different areas to seek for approaches of generating \textit{random-looking} data usually referred to as \textit{pseudo-random} sources or sequences of binary digits \cite{zepernick2013pseudo, golomb1967shift}. A wide spectrum of pseudo-random number generating algorithms have found applications in a large variety of fields including radar, digital signal processing, CDMA, coding theory, cryptographic systems, Monte Carlo simulations, navigation systems, scrambling, etc. \cite{zepernick2013pseudo}.

The term \textit{pseudo-random} is used to emphasize that the binary data at hand is indeed generated by an entirely deterministic causal process but its statistical properties resemble some of the properties of data generated by tossing a fair coin. Remarkably, most efforts were focused on one dimensional pseudo-random sequences \cite{zepernick2013pseudo, golomb1967shift} due to their natural applications and to the relative simplicity of their analytical treatment. One of the most popular methods of generating pseudo-random sequences is due to Golomb \cite{golomb1967shift} and is based on linear-feedback shift registers capable of generating pseudo-random sequences (also called maximal or $m$-sequences) of very low algorithmic complexity \cite{li2009introduction, downey2010algorithmic}. The study of pseudo-random arrays and matrices was launched around the same time \cite{reed1962note, macwilliams1976pseudo, imai1977theory, sakata1981determining}. Among the known two dimensional pseudo-random constructions the most popular are the so-called perfect maps \cite{reed1962note, paterson1994perfect, etzion1988constructions}, and two dimensional cyclic codes \cite{imai1977theory, sakata1981determining}. However, except for the recent articles \cite{soloveychik2017pseudo, soloveychik2017explicit, soloveychik2017spectral, soloveychik2018symmetric}, to the best of our knowledge none of the previous works considered constructions of symmetric sign matrices using their spectral properties as the defining statistical features. In their work, the authors of \cite{soloveychik2018symmetric} designed a family of symmetric sign $n\times n$ matrices whose spectra almost surely (with respect to a certain ensemble of small size) converge to the semicircular law when their sizes grow. The construction is very simple and is based on binary $m$-sequences of lengths of the form $n=2^m-1$ making the generation of the pseudo-random matrices very efficient and fast. 

The current paper is a natural extension of \cite{soloveychik2018symmetric}. Our goal is to show that the pseudo-random matrices constructed in that article not only yield semicircular spectrum in the limit but also mimic the asymptotic independence properties of the truly random Wigner matrices. We prove that if two sequences of matrices from \cite{soloveychik2018symmetric} are generated using sequences of different non-reciprocal primitive polynomials, then the former are asymptotically independent. Technically, this is achieved by verifying that the mixed centered moments of the matrices at hand vanish asymptotically. This result sheds much more light on the nature of spectral pseudo-randomness in matrices and provides the first example of a family of pseudo-random matrix constructions with the aforementioned design properties and low algorithmic complexity.

The rest of the text is organized as follows. First we set up the notation in Section \ref{sec:not}. Section \ref{sec:gol} introduces the pseudo-random construction defined in \cite{soloveychik2018symmetric} and outlines its properties relevant for the current text. Section \ref{sec:asympt_fr_ps} shows that our pseudo-random matrices are indeed asymptotically independent. Numerical simulations supporting our findings are shown in Section \ref{sec:num}. We make our conclusions in Section \ref{sec:conc}.

\section{Notation}
\label{sec:not}
We denote the ranges of non-negative integers by $[n] = \{0,\dots,n-1\}$. Note also that the labeling of matrix elements starts with $0$. We write $\tr{\A}=\frac{1}{n}\Tr{\A}$, where $\A$ is an $n\times n$ matrix. Introduce a family of functions
\begin{equation}
\label{eq:zeta_def}
\begin{array}{llcl}
\hspace{-0.25 cm} \zeta_n : & GF(2)^{n \times n} & \to & \{-1,1\}^{n \times n}, \\
& \{u_{ij}\}_{i,j=0}^{n-1} & \mapsto & \{(-1)^{u_{ij}}\}_{i,j=0}^{n-1},
\end{array}
\end{equation}
mapping binary $0/1$ matrices into sign matrices of the same sizes. Below we suppress the subscript and write $\zeta$ for simplicity. We use the following standard notation for the limiting relations between functions. We write $f(n)=o(g(n))$ if $\lim_{n\to \infty} \frac{f(n)}{g(n)} = 0$ and $f(n)=O(g(n))$ if $|f(n)| \leqslant C |g(n)|$ for some constant $C$ and $n$ big enough.

\section{The Pseudo-Random Construction}
\label{sec:gol}

In this section, we briefly outline the construction presented in \cite{soloveychik2018symmetric}.

\subsection{Golomb Sequences}
\label{sec:gol_a}
Let $f(x)$ be a binary primitive polynomial of degree $m$ and let $\mathcal{C}$ be a cyclic code of length $n=2^m-1$ with the generating polynomial
\begin{equation}
h(x) = \frac{x^n-1}{f(x)}.
\end{equation}
In other words,
\begin{equation}
\mathcal{C} = \{\c \in GF(2)^n\mid h(x)|\c(x)\},
\end{equation}
where
\begin{equation}
\c(x) = \sum_{i=0}^{n-1}c_ix^i.
\end{equation}
When $f(x)$ is primitive, as in our case, a code constructed in such a way is usually referred to as a simplex code. All the non-zero codewords of the obtained code are shifts of each other and are called \textit{Golomb sequences} \cite{golomb1967shift} (we can, therefore, simply say that a simplex code is generated by a Golomb sequence).

%Denote a Golomb sequence generated using polynomial $f(x)$ by $\varphi = \{\varphi(i)\}_{i=0}^{n-1}$. 
Let $\mathcal{C}$ be the simplex code constructed from the primitive binary polynomial $f(x)$ as before. Fix a non-zero codeword $\varphi \in \mathcal{C}$ (a Golomb sequence) and construct a real symmetric matrix
\begin{equation}
\label{eq:def_a_eq}
\A_n = \{a_{ij}\}_{i,j=0}^{n-1} = \bigg\{\frac{1}{2\sqrt{n}}(-1)^{\varphi(i-j) + \varphi(j-i)}\bigg\}_{i,j=0}^{n-1}.
\end{equation}
Matrix $\A_n$ can be interpreted in the following way. Consider a circulant non-symmetric matrix

\begin{equation}
\T = \begin{pmatrix} 
\varphi(0) & \varphi(1) & \varphi(2) & \dots & \varphi(n-1) \\
\varphi(n-1) & \varphi(0) & \varphi(1) & \dots & \varphi(n-2) \\
\varphi(n-2) & \varphi(n-1) & \varphi(0) & \dots & \varphi(n-3) \\
\vdots & \vdots & \vdots & \ddots & \vdots \\
\varphi(1) & \varphi(2) & \varphi(3) & \dots & \varphi(0) \\
\end{pmatrix}.
\end{equation}
The consecutive rows of $\T$ are simply cyclic shifts of the Golomb sequence written in its first rows. The symmetric matrix $\A_n$ can now be written as
\begin{equation}
\A_n = \frac{1}{2\sqrt{n}}\zeta(\T+\T^\top).
\end{equation}
It is easy to check that the obtained matrix is circulant, since for any $k \in [n],\; \A_n$ is invariant under the shift of indices of the form
\begin{equation}
i \to i+ k \mod n,\quad j \to j+k \mod n.
\end{equation}

Recall that any non-zero codeword of $\mathcal{C}$ is a cyclic shift of $\varphi$, therefore, we may obtain an ensemble of matrices from the code $\mathcal{C}$ indexed by integers within the range $a \in [n]$, as
\begin{equation}
\A_n(a) = \bigg\{\frac{1}{2\sqrt{n}}(-1)^{\varphi(i-j+a) + \varphi(j-i+a)}\bigg\}_{i,j=0}^{n-1},
\end{equation}
with the original matrix $\A_n$ corresponding to $\A_n(0)$.
\begin{definition}
Given a primitive binary polynomial $f(x)$, an ensemble of pseudo-random matrices $\mathcal{A}_n$ of order $n$ is the set of all $\A_n(a),\; a \in [n]$ and their negatives, endowed with the uniform probability measure.
\end{definition}
Below, whenever expectation over $\A_n$ is considered it should be always treated with respect to the uniform measure over $\mathcal{A}_n$. 

\begin{definition}[\cite{speicher2009free}]
We say that a sequence $\{\A_n\}_{n=1}^\infty$ of matrices of growing sizes has an asymptotic eigenvalue distribution if
\begin{equation}
\label{eq:spec_exist}
\beta_r = \lim_{n \to \infty} \tr{\A_n^r}
\end{equation}
exist for all $r \in \N$.
\end{definition}

One of the central results of \cite{soloveychik2018symmetric} reads as follows.
\begin{prop}[Proposition 1 from \cite{soloveychik2018symmetric}]
\label{cor:main}
Let $\A_n \in \mathcal{A}_n$, then for a fixed $r \in \N$ and $n = n(m)$ tending to infinity,
\begin{equation}
\mathbb{E}\[\beta_r(\A_n)\] = \begin{cases} \beta_r + O\(\frac{1}{n}\), & r \text{ even}, \\ \qquad 0, & r \text{ odd},\end{cases}
\end{equation}
where
\begin{equation}
\label{eq:dc_mom}
\beta_r = \int x^r dF_{sc} = \begin{cases} \;\; 0, & r \text{ odd}, \\ \frac{C_{r/2}}{2^r}, & r \text{ even},\end{cases}
\end{equation}
are the moments of the semicircular distribution \cite{wigner1955characteristic} and
\begin{equation}
\label{eq:catal_num}
C_r = \frac{(2r)!}{r!(r+1)!}
\end{equation}
are the Catalan numbers.
\end{prop}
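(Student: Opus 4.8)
The plan is to exploit the circulant structure of the matrices together with the trace description of an $m$-sequence to rewrite the expected moments as an average of powers of Kloosterman sums, and then to read off the semicircular moments from the Sato--Tate law. Write $q=2^{m}=n+1$, let $\alpha$ be a root of $f(x)$ (a primitive element of $\mathbb{F}_q=GF(2^{m})$), and let $\tau:\mathbb{F}_q\to\mathbb{F}_2$ be the absolute trace. The starting point is the standard representation $\varphi(j)=\tau(\beta\alpha^{j})$ for some fixed $\beta\in\mathbb{F}_q^{*}$, which makes the first row of the circulant matrix $\A_n(a)$ equal to
\begin{equation}
a_{0j}(a)=\frac{1}{2\sqrt n}(-1)^{\tau\left(\theta\left(\alpha^{j}+\alpha^{-j}\right)\right)},\qquad \theta=\beta\alpha^{a}\in\mathbb{F}_q^{*}.
\end{equation}
Since $\A_n(a)$ is circulant, its eigenvalues are the discrete Fourier coefficients of this row; after identifying the index group $\mathbb{Z}/n$ with $\mathbb{F}_q^{*}$ via $j\mapsto\alpha^{j}$, they are $\mu_k(a)=\lambda_k(\theta)/(2\sqrt n)$, where
\begin{equation}
\lambda_k(\theta)=\sum_{y\in\mathbb{F}_q^{*}}\chi_k(y)\,(-1)^{\tau\left(\theta\left(y+y^{-1}\right)\right)}
\end{equation}
is a Kloosterman sum, twisted by the multiplicative character $\chi_k$ when $k\neq0$. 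The odd case is then immediate: because $\mathcal{A}_n$ is closed under negation, the ensemble average of $\tr{\A_n^{r}}$ vanishes identically for odd $r$.

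For even $r$ I would expand $\tr{\A_n(a)^{r}}=\frac1n\sum_k\mu_k(a)^{r}$ and average over $a$, that is over $\theta\in\mathbb{F}_q^{*}$, to obtain the exact identity
\begin{equation}
\mathbb{E}\left[\tr{\A_n^{r}}\right]=\frac{1}{n^{2}}\sum_{\theta\in\mathbb{F}_q^{*}}\sum_{k}\left(\frac{\lambda_k(\theta)}{2\sqrt n}\right)^{r}.
\end{equation}
Thus the expected $r$-th moment is literally the average, over the two-parameter family indexed by $(\theta,\chi_k)$, of the normalized $r$-th powers of these Kloosterman sums. All the closed-walk combinatorics of Wigner's method is in this formulation repackaged into a single family average, and the balance identity $\sum_{a}(-1)^{\varphi(a)}=-1$ is exactly its trivial-character, degenerate instance.

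The conclusion rests on two inputs about Kloosterman sums. Weil's bound gives $|\lambda_k(\theta)|\le 2\sqrt q$, which is precisely the normalization above, so writing $\lambda_k(\theta)/\sqrt q=2\cos\phi_{\theta,k}$ the right-hand side becomes the family average of $(2\cos\phi)^{r}/2^{r}$. The Sato--Tate equidistribution for Kloosterman sums then states that the angles $\phi_{\theta,k}$ equidistribute with respect to the Sato--Tate measure, whose image on $[-2,2]$ is the semicircle law and whose $r$-th moment is the Catalan number $C_{r/2}$ for even $r$; hence the average tends to $C_{r/2}/2^{r}=\beta_r$. The hard part is the error estimate: a soft equidistribution statement yields only convergence, whereas the stated $O(1/n)$ requires the effective (quantitative) version, equivalently an exact evaluation of the family moment $\sum_{\theta,k}\lambda_k(\theta)^{r}$ with remainder of order $n^{r/2+1}$. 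Concretely this is the problem of counting, to second order, the solutions of $\sum_{k=1}^{r}\left(\alpha^{i_k-i_{k+1}}+\alpha^{i_{k+1}-i_k}\right)=0$, and it is here that the full correlation structure of the Golomb sequence---beyond the single balance estimate---has to be used.
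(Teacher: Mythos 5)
The first thing to note is that this paper does not prove Proposition~\ref{cor:main} at all: it is imported verbatim from \cite{soloveychik2018symmetric}, where the argument is combinatorial --- one expands $\mathbb{E}\left[\tr{\A_n^r}\right]$ over closed index paths $i_0\to i_1\to\cdots\to i_{r-1}\to i_0$, splits the paths according to whether the codeword $\nu(\u)$ of (\ref{eq:eq_40}) vanishes, counts the ``even'' paths by Catalan numbers to get the main term $\beta_r$, and kills the remaining paths by averaging over the seed $a$ using the balance and shift-and-add (two-level autocorrelation) properties of the $m$-sequence. Your route is genuinely different: the trace representation $\varphi(j)=\tau(\beta\alpha^j)$, the diagonalization of the circulant matrix, the identification of the eigenvalues with normalized twisted Kloosterman sums $\lambda_k(\theta)$, the disposal of the odd case via closure of $\mathcal{A}_n$ under negation, and the exact identity expressing $\mathbb{E}\left[\tr{\A_n^r}\right]$ as a family average over $(\theta,\chi_k)$ are all correct, and the reformulation is illuminating --- it makes transparent why the semicircle (rather than some other law) should appear.

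The gap is precisely where you flag ``the hard part,'' and it is not a cosmetic one. Orthogonality of characters gives exactly
\begin{equation}
\sum_{\theta,k}\lambda_k(\theta)^r \;=\; n\big[(n+1)N_r-n^{r-1}\big],\qquad N_r=\#\Big\{\mathbf{y}\in(\mathbb{F}_q^*)^r:\textstyle\prod_i y_i=1,\ \sum_i\big(y_i+y_i^{-1}\big)=0\Big\},
\end{equation}
so that $\mathbb{E}\left[\tr{\A_n^r}\right]=\big[(n+1)N_r-n^{r-1}\big]/\big(2^r n^{r/2+1}\big)$. The leading term $N_r\sim n^{r-2}$ cancels against $n^{r-1}/(n+1)$, and the Catalan contribution $C_{r/2}\,n^{r/2}$ sits several orders below it; obtaining the stated $O(1/n)$ therefore requires knowing $N_r$ to additive precision $O(n^{r/2-1})$, i.e.\ to relative precision $n^{1-r/2}$, which is far beyond square-root cancellation for $r\geqslant 4$. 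A soft appeal to Sato--Tate equidistribution gives only $\mathbb{E}\left[\beta_r(\A_n)\right]\to\beta_r$ with no rate, and even granting an effective version from the sheaf-theoretic machinery, the generic error per symmetric-power moment of a one-parameter Kloosterman family is of square-root type, not $O(1/q)$; moreover, the applicability of Sato--Tate to this particular two-parameter family of \emph{twisted diagonal} sums $\sum_y\chi_k(y)(-1)^{\tau(\theta(y+y^{-1}))}$ in characteristic $2$ is itself a claim that needs a precise reference (twisted families can be exceptional, as the Sali\'e sums show in odd characteristic). Finally, the residual counting problem you arrive at --- separating the solutions of $\sum_i(\alpha^{u_i}+\alpha^{-u_i})=0$ with all multiplicities even (Catalan many to leading order) from the ``accidental'' ones --- is exactly the dichotomy $\nu(\u)=\bm{0}$ versus $\nu(\u)\neq\bm{0}$ that the combinatorial proof resolves, so the Kloosterman reformulation repackages the difficulty rather than discharging it. As written, the proposal establishes the odd case and the convergence set-up, but not the even case with its $O(1/n)$ rate.
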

This result in particular implies that the limiting spectral law of our pseudo-random matrices is Wigner's semicircular law.

%\section{Asymptotic Independence}
%\label{sec:asympt_fr}
%As stated in the Introduction, FPT can be viewed as a \textit{non-commutative} analog of classical probability. Indeed, translated into the language of finite-dimensional operators, the main objects studied by FPT are sequences of random matrices and relations between them. 
Consider a pair of sequences of matrices $\{\A_n\}_{n=1}^\infty$ and $\{\B_n\}_{n=1}^\infty$, each of which is assumed to have an asymptotic eigenvalue distribution. Ideally, we want to understand the limiting behavior of any reasonably regular function of $\A_n$ and $\B_n$. By the method of moment this calls for investigation of the moments $\tr{\A_n^{t_1}\B_n^{s_1}\cdots \A_n^{t_k}\B_n^{s_k}}$ for natural powers $t_i$ and $s_i$. Since our pseudo-random construction yields circulant matrices, they commute and we only need to study their mixed moments of the form $\tr{\A_n^t\B_n^s}$.
\begin{definition}
Let $\{\A_n\}_{n=1}^\infty$ and $\{\B_n\}_{n=1}^\infty$ be two sequences of random matrices of growing and matching sizes having asymptotic eigenvalue distributions with the moments $\delta_r$ and $\zeta_r$ respectively. Let $t, s \in \N$, we say that $\A_n$ and $\B_n$ are asymptotically independent if
\begin{equation}
\label{eq:def_free}
\tr{\(\A_n^t-\delta_t\I\)\(\B_n^s-\zeta_s\I\)} \to 0,\; n\to\infty.
\end{equation}
\end{definition}
Note that the mode of asymptotic independence (e.g., in expectation, in probability, almost surely) is determined by the mode of convergence to zero in (\ref{eq:def_free}).

\section{Asymptotic Pseudo-Independence}
\label{sec:asympt_fr_ps}
In this section we show that two sequences of pseudo-random matrices constructed as described in Section \ref{sec:gol} from different non-reciprocal primitive polynomials are asymptotically independent in expectation, namely that they satisfy the moment condition (\ref{eq:def_free}) on average over the ensembles $\mathcal{A}_n$ and $\mathcal{B}_n$.
%\begin{definition}
%We say that two sequences of matrices $\{\A_n\}_{n=1}^\infty$ and $\{\B_n\}_{n=1}^\infty$ of growing and matching sizes having asymptotic eigenvalue distributions with the moments $\beta_r$ and $\delta_r$ respectively are asymptotically pseudo-free if for any two sequences of natural numbers $k_1,\dots,k_r$ and $l_1,\dots,l_r$,
%\begin{equation}
%\label{eq:def_free}
%\tr{\prod_{i=1}^r\(\A_n^{k_i}-\beta_{k_i}\I\)\(\B_n^{l_i}-\delta_{l_i}\I\)} \to 0,\; n\to\infty.
%\end{equation}
%\end{definition}
%As before, the result holds on average over the ensembles $\mathcal{A}_n$ and $\mathcal{B}_n$.

Given a binary polynomial $f(x)$, its reciprocal is a polynomial of the same degree defined as
\begin{equation}
\label{eq:recipr_def}
\hat{f}(x) = x^{\deg f} f\(x^{-1}\).
\end{equation}

\begin{lemma}
A reciprocal of a primitive polynomial is primitive.
\end{lemma}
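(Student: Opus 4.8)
The plan is to prove this via the characterization of primitivity in terms of the multiplicative order of the roots. Recall that a monic polynomial $f$ of degree $m$ over $GF(2)$ is primitive precisely when it is irreducible and any (equivalently, every) root $\alpha$ in the splitting field $GF(2^m)$ is a primitive element, i.e. $\alpha$ has multiplicative order $2^m-1$; since $2^m-1$ is odd, the Frobenius conjugates $\alpha,\alpha^2,\dots,\alpha^{2^{m-1}}$ all share this order, so the "equivalently every" is automatic. First I would record the elementary observation that a primitive $f$ has nonzero constant term: if $f(0)=0$, then $x \mid f(x)$, contradicting irreducibility for $m\geqslant 1$ (the root $0$ has no multiplicative order). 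Consequently the leading coefficient of $\hat{f}(x)=x^{m}f(x^{-1})$ equals $f(0)=1$ and its constant term equals the leading coefficient of $f$, so $\hat{f}$ is again monic of degree exactly $m$, with no degree collapse.

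Next I would relate the roots of $\hat{f}$ to those of $f$. For any nonzero $\beta$ one has $\hat{f}(\beta)=\beta^{m}f(\beta^{-1})$, so $\beta$ is a root of $\hat{f}$ if and only if $\beta^{-1}$ is a root of $f$; the roots of $\hat{f}$ are therefore exactly the inverses $\{\alpha^{-1}\}$ of the roots $\{\alpha\}$ of $f$. Since inversion is an automorphism of the cyclic group $GF(2^m)^{\ast}$, the order of $\alpha^{-1}$ equals the order of $\alpha$, so if $\alpha$ is primitive then so is $\alpha^{-1}$.

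It then remains to argue that $\hat{f}$ is irreducible, so that it is genuinely the minimal polynomial of the primitive element $\alpha^{-1}$. The cleanest route is to note that reciprocation is multiplicative on polynomials with nonzero constant term: $\widehat{gh}=\hat{g}\,\hat{h}$ whenever $\deg(gh)=\deg g+\deg h$, which always holds over a field. Combined with the involution $\hat{\hat{f}}=f$, any nontrivial factorization of $\hat{f}$ would transport to one of $f$, so irreducibility of $f$ forces irreducibility of $\hat{f}$. An irreducible polynomial of degree $m$ whose roots are primitive elements is primitive by definition, which completes the argument.

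I expect the only genuine subtlety to be the bookkeeping around degrees and constant terms, namely ensuring that reciprocation neither lowers the degree nor introduces a spurious factor of $x$; all of this is controlled by the single fact $f(0)\neq 0$. The conceptual content, that inversion preserves multiplicative order and that reciprocation is a degree-preserving involution compatible with factorization, is then immediate, so no hard estimate or delicate construction is required.
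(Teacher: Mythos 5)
Your proof is correct and follows essentially the same route as the paper, which simply cites the fact that $\varepsilon^{-1}$ is a root of the reciprocal whenever $\varepsilon$ is a root of the original polynomial; you have merely filled in the supporting details (nonvanishing constant term, degree preservation, irreducibility of $\hat{f}$) that the paper leaves implicit. No gaps.
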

\begin{proof}
The result follows directly from the properties of the primitive polynomials and the fact that if $\varepsilon$ is a root of a polynomial, $\varepsilon^{-1}$ is the root of its reciprocal.
\end{proof}

Assume that the generating polynomial $g(x)$ of the Golomb sequence $\psi$ does not coincide neither with the generating polynomial $f(x)$ of $\phi$ nor with its reciprocal $\hat{f}(x)$.

\begin{prop}
\label{prop:main_res}
Let $\{f_m(x)\}_{m=1}^\infty$ and $\{g_m(x)\}_{m=1}^\infty$ be two sequences of different and non-reciprocal primitive polynomials of degrees $m$. For $n=2^m-1$, let $\{\A_n\}_{n=1}^\infty$ and $\{\B_n\}_{n=1}^\infty$ be pseudo-random matrices constructed from $f_m$ and $g_m$ correspondingly with arbitrary seeds, then $\A_n$ and $\B_n$ are asymptotically independent on average.
\end{prop}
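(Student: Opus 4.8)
The plan is to establish the moment condition (\ref{eq:def_free}) in expectation. Expanding the centered product and using $\tr{\I}=1$ gives
\begin{equation}
\mathbb{E}\,\tr{\left(\A_n^t-\beta_t\I\right)\left(\B_n^s-\beta_s\I\right)}=\mathbb{E}\,\tr{\A_n^t\B_n^s}-\beta_t\,\mathbb{E}\,\tr{\B_n^s}-\beta_s\,\mathbb{E}\,\tr{\A_n^t}+\beta_t\beta_s,
\end{equation}
and Proposition \ref{cor:main} identifies the marginal limits $\delta_t=\zeta_t=\beta_t$ while forcing $\mathbb{E}\,\tr{\A_n^t}\to\beta_t$ and $\mathbb{E}\,\tr{\B_n^s}\to\beta_s$. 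Hence the whole statement collapses to the asymptotic factorization of a single mixed moment,
\begin{equation}
\label{eq:goalfac}
\mathbb{E}\,\tr{\A_n^t\B_n^s}\longrightarrow\beta_t\beta_s,\qquad n\to\infty .
\end{equation}
Replacing $\A_n$ by $-\A_n$ multiplies the left side by $(-1)^t$ and replacing $\B_n$ by $-\B_n$ by $(-1)^s$, so averaging over the sign flips in $\mathcal{A}_n$ and $\mathcal{B}_n$ annihilates the moment unless $t$ and $s$ are both even; since $\beta_t\beta_s$ likewise vanishes when either index is odd, only even $t,s$ require attention.

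For even $t,s$ I would expand $\Tr{\A_n^t\B_n^s}$ over closed walks $i_0\to\cdots\to i_{t+s}=i_0$ whose first $t$ steps carry $\A$-weights and last $s$ steps carry $\B$-weights, parametrised by the increments $d_p=i_p-i_{p+1}$ subject to $\sum_p d_p\equiv 0 \pmod n$. Because the two seeds are drawn independently, the expected sign of a walk factors across its $\A$- and $\B$-edges. Writing each Golomb sequence in its trace form $\phi(x)=\tau(\kappa\alpha^x)$ and $\psi(x)=\tau(\kappa'\alpha'^x)$, where $\tau$ is the absolute trace of $GF(2^m)$ and $\alpha,\alpha'$ are roots of $f_m,g_m$, the seed average over $\A$ collapses to $1$ when $\sum_{\A\text{-edges}}(\alpha^{d_p}+\alpha^{-d_p})=0$ and to $-1/n$ otherwise, and similarly over $\B$ with $\alpha'$. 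Resolving the cyclic closure by the characters of $\mathbb{Z}_n$ and substituting $x=\alpha^{d}$, so that the frequency weight $\omega^{\ell d}$ ($\omega=e^{2\pi i/n}$) becomes a multiplicative character $\chi_\ell$ of $GF(2^m)^{*}$, the whole moment becomes a diagonal sum over a single frequency,
\begin{equation}
\label{eq:klform}
\mathbb{E}\,\tr{\A_n^t\B_n^s}=\frac{1}{n^{3}(2\sqrt n)^{t+s}}\sum_{\ell=0}^{n-1}\left(\sum_{\gamma\neq 0}K(\gamma,\ell)^{t}\right)\left(\sum_{\gamma'\neq 0}K'(\gamma',\ell)^{s}\right),
\end{equation}
where $K(\gamma,\ell)=\sum_{x\in GF(2^m)^{*}}(-1)^{\tau(\gamma(x+x^{-1}))}\chi_\ell(x)$ is a Kloosterman sum twisted by $\chi_\ell$, and $K'$ is its analogue built from $\alpha'$; the two matrices are coupled only through the shared $\ell$, which enters $K$ and $K'$ through the characters attached to $\alpha$ and $\alpha'$ respectively.

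Applying the very same reduction to the single-matrix moments recovers $\beta_t$ and $\beta_s$ through Proposition \ref{cor:main}. Writing $P_A(\ell)=\sum_{\gamma\neq0}K(\gamma,\ell)^t$ and $P_B(\ell)=\sum_{\gamma'\neq0}K'(\gamma',\ell)^s$, a short computation rewrites $\mathbb{E}\,\tr{\A_n^t\B_n^s}-\beta_t\beta_s$, up to $O(1/n)$ corrections, as an empirical covariance of $P_A$ and $P_B$ across the frequencies $\ell$, so that (\ref{eq:goalfac}) is equivalent to the decorrelation estimate
\begin{equation}
\frac1n\sum_{\ell}P_A(\ell)P_B(\ell)-\left(\frac1n\sum_{\ell}P_A(\ell)\right)\left(\frac1n\sum_{\ell}P_B(\ell)\right)=o\!\left(n^{\,2+(t+s)/2}\right).
\end{equation}
Opening the powers and summing the shared frequency against the characters reduces $\tfrac1n\sum_\ell P_A P_B$ to a count of tuples in $GF(2^m)^{*}$ obeying the two block-internal additive relations together with a single multiplicative relation coupling the blocks. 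The leading solutions are the within-block inverse-pairings $x_i=x_j^{-1}$ and $y_i=y_j^{-1}$: they force both block products to be trivial, so the coupling relation is automatically satisfied, the two blocks separate, and their contributions reproduce exactly $\beta_t\beta_s$.

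The crux, and the only place the hypothesis $g_m\notin\{f_m,\hat f_m\}$ is consumed, is to show that every cross-block pairing is of strictly lower order. A cross-pair linking an $\A$-variable $x$ to a $\B$-variable $y$ can feed the coupling relation only if $y=x^{\pm 2^{i}}$ for some $i$; but for such a pair to also survive the additive Kloosterman phases one needs $y+y^{-1}=(x+x^{-1})^{2^{i}}$ to fold, via $\tau(z^{2^{i}})=\tau(z)$ and the inversion invariance of $z+z^{-1}$, into the single effective phase of $x$. This folding is possible precisely when $\alpha'=\alpha^{\pm 2^{i}}$, i.e.\ when $\alpha'$ is Galois conjugate to $\alpha$ or to $\alpha^{-1}$, which is exactly the excluded case $g_m\in\{f_m,\hat f_m\}$. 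Under the standing hypothesis no Frobenius twist or inversion aligns the two phase functions, so the cross-block configurations drop by at least one factor of $n$, the covariance is negligible, and (\ref{eq:goalfac}) follows. The main obstacle is therefore this cross-term estimate, namely quantifying uniformly the loss incurred by a misaligned cross-pair, which I would carry out through Weil-type bounds $|K(\gamma,\ell)|=O(\sqrt n)$ on the twisted Kloosterman and cross-correlation sums that appear, the exclusion of $f_m$ and $\hat f_m$ guaranteeing that none of these sums degenerates to its trivial, large value.
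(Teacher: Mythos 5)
Your reduction to $\mathbb{E}\,\tr{\A_n^t\B_n^s}\to\beta_t\beta_s$, the parity argument via the sign flips built into the ensembles, and the exact spectral identity expressing the averaged mixed moment as $\tfrac{1}{n^{3}(2\sqrt n)^{t+s}}\sum_{\ell}\bigl(\sum_{\gamma\neq0}K(\gamma,\ell)^{t}\bigr)\bigl(\sum_{\gamma'\neq0}K'(\gamma',\ell)^{s}\bigr)$ are all correct: the circulant structure diagonalizes both matrices in the common Fourier basis, and the two seed averages become independent averages over the Kloosterman parameters $\gamma,\gamma'$. The gap is that everything after this point is a plan rather than a proof. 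The decorrelation estimate $\tfrac1n\sum_\ell P_A(\ell)P_B(\ell)-\bigl(\tfrac1n\sum_\ell P_A(\ell)\bigr)\bigl(\tfrac1n\sum_\ell P_B(\ell)\bigr)=o\bigl(n^{2+(t+s)/2}\bigr)$, the classification of cross-block pairings, and the Weil-type bounds on the twisted sums are announced (``I would carry out\dots'') but never executed, and they are precisely the hard analytic content of your route.

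Moreover, your diagnosis of where the hypothesis on $f_m,g_m$ is ``consumed'' does not match the statement actually being proved. The paper's proof needs no cross-polynomial correlation estimate at all: it works in position space, splits the closed walk into an $\A$-walk from $i_0$ to $i_0+k$ followed by a $\B$-walk back, and observes that for each fixed displacement $k$ and base point $i_0$ the sum factors \emph{exactly} into a product of two single-polynomial quantities, because the seeds $a$ and $b$ are averaged independently. Each factor is then $\beta_t+O(1/n)$ when $k=0$ and $t$ is even, and $O(1/n)$ otherwise, by the single-matrix estimates of the earlier paper; summing over $k$ gives $\beta_t\beta_s+O(1/n)$. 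In your frequency-space picture this corresponds to $\mathbb{E}_a[\lambda_\ell(\A_n(a))^t]$ being essentially constant over $\ell\neq0$, so the $\ell$-sum factorizes trivially and no cancellation between the two Kloosterman families is required; in particular the averaged statement holds even when $g_m\in\{f_m,\hat f_m\}$, and the distinctness/non-reciprocity hypothesis only becomes material for the almost-sure version with fixed seeds, which the paper explicitly defers. So the cross-term estimate you identify as the crux is both unproved and, for the averaged claim, unnecessary; if you pursue your spectral route, the step you actually need is uniformity in $\ell$ of the single-ensemble quantity $\mathbb{E}_a[\lambda_\ell(\A_n(a))^t]$, which is again a one-polynomial statement reducible to the bounds already available in the prior work.
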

\begin{proof}
Our goal is to show that the expressions of the form
\begin{equation}
\label{eq:exp_mom}
\mathbb{E}\[\tr{\(\A_n^t(a)-\beta_t\I\)\(\B_n^s(b)-\beta_s\I\)}\],
\end{equation}
for all natural $t$ and $s$ converge to zero when $n$ increases. Introduce the following quantity,
\begin{align}
\label{eq:trace_av}
E &= \mathbb{E}\[\tr{\(\A_n^t(a)-\beta_t\I\)\(\B_n^s(b)-\beta_s\I\)}\] + \beta_t\beta_s \nonumber \\ 
&=\mathbb{E}\[\tr{\A_n^t(a)\B_n^s(b)}\] \\ 
&=\frac{1}{n}\mathbb{E}\[\sum_{i,j=0}^{n-1}\[\A^t\]_{ij}\[\B^s\]_{ij}\] \nonumber \\
&= \frac{1}{n^3}\sum_{a=0}^{n-1}\sum_{b=0}^{n-1} \frac{1}{2^{2r}n^r} \sum_{i_0,\dots,i_{t-1}=0}^{n-1}\sum_{\substack{j_1,\dots,j_{s-2}=0, \\ j_0 = i_{t-1}, j_{s-1} = i_0}}^{n-1} (-1)^{\gamma_{\i,\j}(a,b)}, \nonumber
\end{align}
where
\begin{equation}
r=\frac{t}{2}+\frac{s}{2},
\end{equation}
and we denote
\begin{align}
&\gamma_{\i,\j}(a,b) = \sum_{q=0}^{t-1} \varphi(i_{q+1}-i_q+a) + \varphi(i_q - i_{q+1}+a) \nonumber \\
&\;\;+ \sum_{q=0}^{s-1} \psi(j_{q+1}-j_q+b) + \psi(j_q - j_{q+1}+b),
\end{align}
where we treat the indices $q$ of the vertices $i_q$ and $j_q$ modulo $t$ and $s$, respectively. Instead of treating the expression in (\ref{eq:exp_mom}), it is more convenient to demonstrate that $E$ converges to $\beta_r\beta_s$ which is equivalent to the original statement. Let us also write explicitly the condition on indices appearing in (\ref{eq:trace_av}) as
\begin{equation}
\label{eq:crit_cond}
j_0 = i_{t-1},\quad j_{s-1} = i_0.
\end{equation}
Set
\begin{gather}
\label{eq:t_def_i}
u_q = i_{q+1}-i_q \mod n,\quad q=0,\dots,t-1, \\
w_q = j_{q+1}-j_q \mod n,\quad q=0,\dots,s-1.
\end{gather}
Denote the obtained $t$- and $s$-tuples by
\begin{gather}
\u = (u_0,\dots,u_{t-1}) \in [n]^t, \\
\w = (w_0,\dots,w_{s-1}) \in [n]^s,
\end{gather}
and following \cite{soloveychik2018symmetric} use the function
\begin{align}
\label{eq:eq_40}
&\qquad\qquad\quad \nu_t : [n]^t  \to GF(2)^n, \nonumber\\
(u_0,&\dots,u_{t-1}) \\
& \mapsto \bigg\{\sum_{q=0}^{t-1} \mathbbm{1}(u_q=i)+\mathbbm{1}(-u_q=i) \;\mmod 2 \bigg\}_{i=0}^{n-1}, \nonumber
\end{align}
where $\mathbbm{1}$ is an indicator function and the equalities are modulo $n$. We refer the reader to \cite{soloveychik2018symmetric} for a detailed discussion on the properties of $\nu_t$. Briefly, $\nu_t(\cdot)$ takes the $t$-tuple $\u = (u_0,\dots,u_{t-1})$ and first maps it into an extended $2t$-tuple $\(\u,-\u\) = \(u_0,\dots,u_{t-1},-u_0,\dots,-u_{t-1}\) \in [n]^{2t}$. Then it calculates the number of appearances of every number $u \in [n]$ in this $2t$-tuple, which we denote by $\#\{u\}$ and constructs a codeword $\c \in GF(2)^n$ by setting its elements with indices $u$ to $\#\{u\} \;\mmod 2$ and zeros otherwise. For convenience, we suppress the subscript of $\nu_t$ below.

Rewrite $\gamma_{\i,\j}(a,b)$ as
\begin{equation}
\gamma_{\i,\j}(a,b) = \tau(\nu(\u),a) + \tau(\nu(\w),b),
\end{equation}
where
\begin{align}
\label{eq:tau_def}
&\tau(\nu(\u);a) \\
&\quad= \begin{cases}\sum_{q=0}^{t-1} \big[ \varphi(u_q+a) + \varphi(-u_q+a)\big] & \mmod 2, \\ &\nu(\u) \neq \bm{0} \\ \qquad\qquad\qquad\quad 0, &\nu(\u) = \bm{0}.\end{cases} \nonumber
\end{align}

With this notation, we obtain
\begin{equation}
\label{eq:trace_av_13}
E = \frac{1}{2^{2r}n^{r+3}} \sum_{a=0}^{n-1}\sum_{b=0}^{n-1} \sum_{\u,\w} (-1)^{\tau(\nu(\u);a) + \tau(\nu(\w);b)},
\end{equation}
where we assume $\u$ and $\w$ to satisfy (\ref{eq:crit_cond}). Let us denote
\begin{equation}
k = j_0 - i_0,
\end{equation}
then (\ref{eq:trace_av_13}) can be rewritten as
\begin{equation}
E = \frac{1}{2^{2r}n^{r+3}} \sum_{a=0}^{n-1}\sum_{b=0}^{n-1}\sum_{k=0}^{n-1} \sum_{i_0=0}^{n-1}\sum_{\u_k,\w_k} (-1)^{\tau(\nu(\u);a) + \tau(\nu(\w);b)},
\end{equation}
where $t$- and $s$-tuples $\u_k$ and $\w_k$ have their elements $i_{t-1} - i_0 = k$ and $j_0 - j_{s-1} = k$, respectively. Clearly for fixed $k$ and $i_0$, the averages over $a$ and $b$ decouple and we can switch the order of summation to obtain
\begin{align}
E =  \sum_{k=0}^{n-1} \frac{1}{n}\sum_{i_0=0}^{n-1} &\[\frac{1}{2^{t}n^{t/2+1}}\sum_{a=0}^{n-1}\sum_{\u_k} (-1)^{\tau(\nu(\u_k);a)}\] \nonumber \\
\times & \[\frac{1}{2^{s}n^{s/2+1}}\sum_{b=0}^{n-1} \sum_{\w_k} (-1)^{\tau(\nu(\w_k);b)}\].
\end{align}
Now we deal with the sums in the square brackets separately. We focus on the first sum, the second is treated analogously. Let us consider the case of $k=0$ and even $t$. Here, similarly to \cite{soloveychik2018symmetric} we need to count the number of even paths starting and ending at $i_0$ in order to calculate the leading term of the expected value. The calculation follows the same reasoning as in \cite{soloveychik2018symmetric} and for every fixed $i_0$ yields
\begin{equation}
\frac{1}{2^{t}n^{t/2+1}}\sum_{a=0}^{n-1}\sum_{\u_k} (-1)^{\tau(\nu(\u_k);a)} = \beta_t + O\(\frac{1}{n}\).
\end{equation}
For all other combinations of $k > 0$ or odd $t$, using the same approach as in the derivation of a bound on III in the proof of Proposition 1 in \cite{soloveychik2018symmetric}, we get
\begin{equation}
\frac{1}{2^{t}n^{t/2+1}}\sum_{a=0}^{n-1}\sum_{\u_k} (-1)^{\tau(\nu(\u_k);a)} = O\(\frac{1}{n}\).
\end{equation}
Similarly, for the second sum,
\begin{equation}
\frac{1}{2^{s}n^{s/2+1}}\sum_{b=0}^{n-1}\sum_{\w_k} (-1)^{\tau(\nu(\w_k);a)} = \beta_s + O\(\frac{1}{n}\),
\end{equation}
when $k=0$ and $s$ is even. Otherwise,
\begin{equation}
\frac{1}{2^{s}n^{s/2+1}}\sum_{b=0}^{n-1}\sum_{\w_k} (-1)^{\tau(\nu(\w_k);a)} = O\(\frac{1}{n}\).
\end{equation}
Overall, we conclude
\begin{align}
E & = \sum_{k=0}^{n-1} \frac{1}{n}\sum_{i_0=0}^{n-1} \[\beta_t + O\(\frac{1}{n}\)\]\[\beta_s + O\(\frac{1}{n}\)\] \nonumber \\ 
&= \beta_t\beta_s + O\(\frac{1}{n}\),
\end{align}
which according to (\ref{eq:trace_av}) completes the proof.
\end{proof}

It is important to note that Proposition \ref{prop:main_res} claims asymptotic independence of the two sequences at hand on average. In fact, asymptotic almost sure independence can also be demonstrated using the same technique as in \cite{soloveychik2018symmetric} (see Figure \ref{fig:mixed_mom} showing the decay of the variance). However, to avoid duplication of the proof we decide to omit the rigorous derivation here.

\section{Numerical Experiments}
\label{sec:num}
In this section, we illustrate our theoretical results from Section \ref{sec:asympt_fr_ps} using numerical simulations. More specifically, we examine the behavior of low mixed moments of our pseudo-random matrices when the sizes of the latter grow.

Let us fix a range $M = m_b,\dots,m_e$ of integers and consider two sequences of primitive binary polynomials $f_{m_i}$ and $g_{m_i},\; m_i \in M$. Each of the constructed polynomials gives raise to an ensemble of cardinality $n_i = 2^{m_i}-1$ of pseudo-random matrices of sizes $n_i \times n_i$. Denote the corresponding ensembles by $\mathcal{A}_{n_i}$ and $\mathcal{B}_{n_i}$. In our experiment we took $m_b=7,\; m_e = 19$. Polynomials $f_{m_i}$ were chosen to be the first polynomials in the corresponding rows of the table \cite{vzivkovic1994table}. Polynomials $g_{m_i}$ were obtained through $2$-fold decimation of $f_{m_i}$-s and can be easily checked to be non-reciprocal with $f_{m_i}$-s \cite{goresky2012algebraic}. 

\begin{figure}[!t]
\includegraphics[width=3.75in]{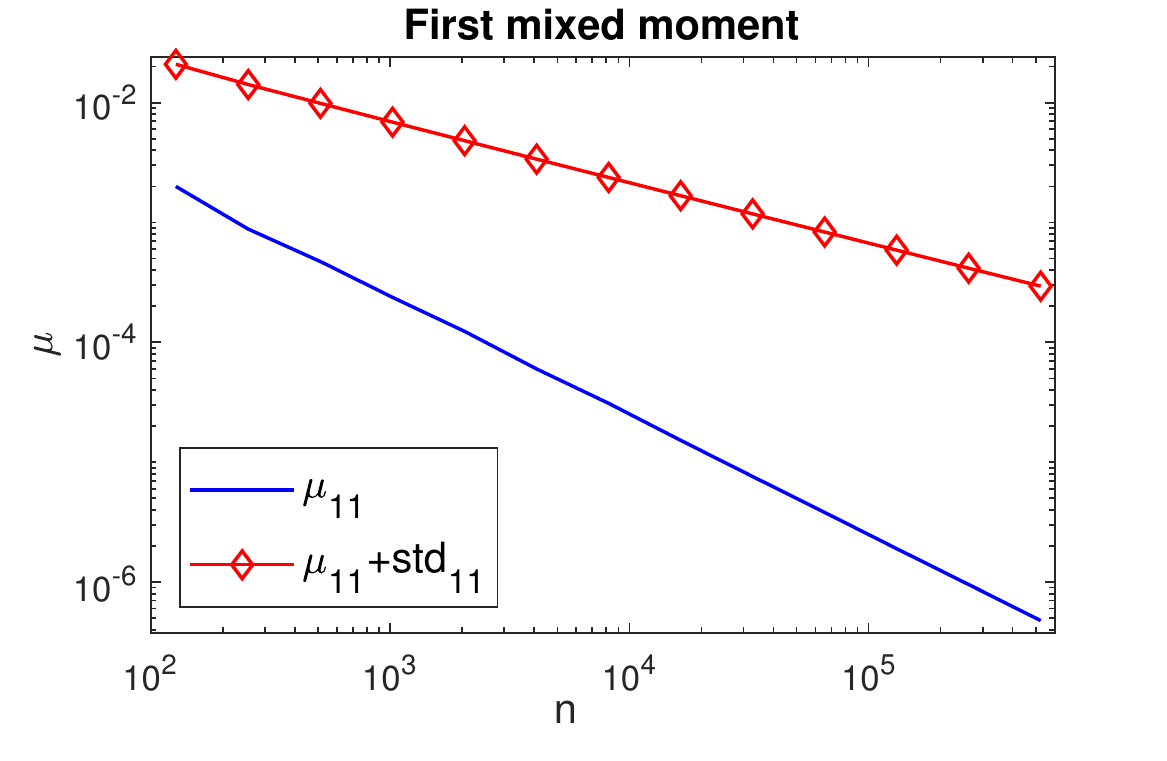}
\caption{First mixed moment of the form (\ref{eq:mix_mom}) plus its standard deviation region in pseudo-random matrices of sizes $n= 2^7-1,\dots,2^{19}-1$.}
\label{fig:mixed_mom}
\end{figure}

We focus on studying the behavior of the expected odd mixed moment
\begin{equation}
\label{eq:mix_mom}
\mu_{ts}(n_i) = \mathbb{E}_{\A_{n_i} \sim \mathcal{A}_{n_i}, \B_{n_i} \sim \mathcal{B}_{n_i}} \tr{\A_{n_i}^t\B_{n_i}^s},
\end{equation}
as a function of $n_i$. Figure \ref{fig:mixed_mom} demonstrates that the mixed moments at hand decay to zero as expected. In addition, it shows the decay of the variance of the trace in (\ref{eq:mix_mom}), which implies almost sure asymptotic independence as explained earlier.

\begin{figure}[!t]
\includegraphics[width=3.75in]{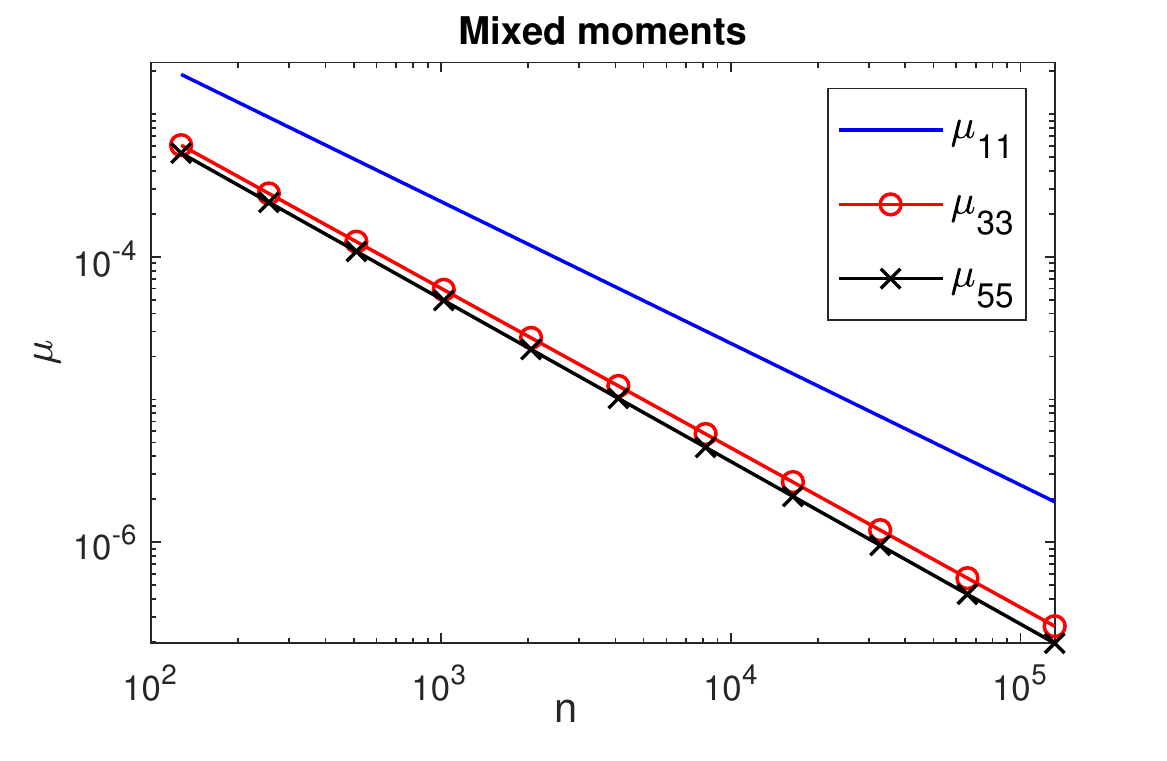}
\caption{Higher mixed moments of the form (\ref{eq:mix_mom}) in pseudo-random matrices of sizes $n= 2^7-1,\dots,2^{17}-1$.}
\label{fig:h_mixed_mom}
\end{figure}

Figure \ref{fig:h_mixed_mom} provides an empirical comparison of the rates of convergence of higher mixed moments to zero. Here, we took two polynomials $f_{m_i}^j$ and $g_{m_i}^j,\; j=1,2$ of every degree in the range defined by $m_b=7,\; m_e=17$ from the same table \cite{vzivkovic1994table} and averaged the moments over the two corresponding ensembles $\mathcal{A}_{n_i}^j$ and $\mathcal{B}_{n_i}^j$. Remarkably, this graph supports our theoretical result established in Proposition \ref{prop:main_res} claiming that mixed moments decay with the rate of $O\(\frac{1}{n}\)$.

\section{Conclusions}
\label{sec:conc}
In this article, we show that the recently discovered in \cite{soloveychik2018symmetric} family of pseudo-random symmetric sign matrices exhibits asymptotic independence properties. This results allows one to generate pairs of \textit{random-looking} symmetric sign matrices with semicircular limiting spectrum and vanishing odd mixed moments.

\bibliographystyle{IEEEtran}
\bibliography{ilya_bib}
\end{document}